\RequirePackage[l2tabu, orthodox]{nag}

\documentclass[preprint,12pt]{elsarticle}
\usepackage{amsmath}
\usepackage{amsthm}
\usepackage{amssymb}
\usepackage[colorlinks=true]{hyperref}

\journal{J. Approx. Theory}
\newtheorem{theorem}{Theorem}

\newtheorem{corollary}[theorem]{Corollary}

\newtheorem{remark}[theorem]{Remark}

\numberwithin{equation}{section}

\def\Ai{\mathrm {Ai}}
\def\Bi{\mathrm {Bi}}
\def\ph{\mathrm {ph}\hspace{0.3mm}}
\renewcommand\O[1]{\mathcal{O}\left({#1}\right)}

\renewcommand\Re{\hspace{0.2mm}\mathrm{Re}\hspace{0.2mm}}
\renewcommand\Im{\hspace{0.2mm}\mathrm{Im}\hspace{0.2mm}}

\begin{document}

\begin{frontmatter}

\title{Asymptotic approximations of the continuous Hahn polynomials and their zeros}

\author[SZU]{Li-Hua Cao}
\ead{macaolh@szu.edu.cn}

\author[CUHKSZ]{Yu-Tian Li\corref{cor}}
\ead{liyutian@cuhk.edu.cn}

\author[SCUT]{Yu Lin}
\ead{scyulin@scut.edu.cn}

\address[SZU]{Department of Mathematics,
Shenzhen University,
Shenzhen, Guangdong, China
}

\address[CUHKSZ]{
School of Science and Engineering,
Chinese University of Hong Kong, Shenzhen,
Shenzhen, Guangdong, China
}

\address[SCUT]{
Department of Mathematics,
South China University of Technology,
Guangzhou, Guangdong, China
}

\cortext[cor]{Corresponding author.}

\begin{abstract}
Asymptotic approximations for the continuous Hahn polynomials and their zeros as the degree grows to infinity are established via their three-term recurrence relation.
The methods are based on the uniform asymptotic expansions for difference equations developed by Wang and Wong (\textit{Numer. Math.}, 2003) and
the matching technique in the complex plane developed by Wang (\textit{J. Approx. Theory}, 2014).

\end{abstract}
\begin{keyword}
continuous Hahn polynomials
\sep three-term recurrence relation
\sep uniform asymptotics
\sep Plancherel-type asymptotics
\sep zeros

\MSC[2010] 41A60 \sep 33C45
\end{keyword}

\end{frontmatter}

\section{Introduction}
The continuous Hahn polynomials are defined by
\begin{align}
p_n(x)&:=p_n(x;a,b,c,d)
\label{eq:pn}
\\
&=i^n\frac{(a+c)_n(a+d)_n}{n!}
{}_3F_2\left(\left.\begin{array}{c}
-n,n+a+b+c+d-1,a+ix\\
a+c,a+d\end{array}\right|1
\right),
\nonumber
\end{align}
where the generalized hypergeometric function is
\[
{}_3F_2\left(\left.\begin{array}{c}
\alpha,\beta,\gamma\\
\delta,\eta\end{array}\right|z
\right)
=\sum_{n=0}^\infty\frac{(\alpha)_n(\beta)_n(\gamma)_n}{(\delta)_n(\eta)_n}\frac{z^n}{n!}
\]
with $(\alpha)_n=\Gamma(n+\alpha)/\Gamma(\alpha)$ denoting the Pochhammer symbol.
The above series expansion of ${}_3F_2$ is generically convergent for $|z|<1$ and for $z=1$ for some cases;
while for the polynomial $p_n(x)$, the series in (\ref{eq:pn}) is terminating.
If the real parts of $a,b,c,d$ are all positive, $c=\overline{a}$ and $d=\overline{b}$,
then $\{p_n(x)\}_{n=0}^\infty$ form a sequence of orthogonal polynomials (see~\cite{Askey1985,Koelink}):
\begin{equation}\label{eq:orthogonal}
\frac{1}{2\pi}\int_{-\infty}^\infty p_m(x)p_n(x)w(x)dx=h_n\delta_{mn},
\end{equation}
where the weight function is
\begin{equation}\label{eq:weight}
w(x)=\Gamma(a+ix)\Gamma(b+ix)\Gamma(c-ix)\Gamma(d-ix)=\left|\Gamma(a+ix)\Gamma(b+ix)\right|^2,
\end{equation}
and
\begin{equation}\label{eq:hn}
h_n= \frac{\Gamma(n+a+c)\Gamma(n+a+d)\Gamma(n+b+c)\Gamma(n+b+d)}{(2n+a+b+c+d-1)\Gamma(n+a+b+c+d-1)n!}.
\end{equation}

In the Askey scheme of hypergeometric orthogonal polynomials, the continuous Hahn polynomials are below the Wilson polynomials
and above the Jacobi polynomials and the Meixner-Pollaczek polynomials,
since the continuous Hahn polynomials provide one extra degree of freedom
more than the Jacobi polynomials or the Meixner-Pollaczek polynomials,
and there are limiting relations between them.
In 1982, Askey and Wilson~\cite{AW} found what nowadays known as the symmetric continuous Hahn polynomials which are orthogonal on $\mathbb R$
with respect to the weight function $|\Gamma(\alpha+i x)\Gamma(\gamma+i x)|^2$
(provided that $\alpha,\gamma>0$ or $\alpha=\bar\gamma$ and $\Re\alpha>0$),
these polynomials have two parameters.
Later, Atakishiyev and Suslov~\cite{AS} introduced the continuous Hahn polynomials which have one extra parameter;
see also the comments in Askey~\cite{Askey1985}, where the orthogonality in~(\ref{eq:orthogonal}) was given.
Koelink~\cite{Koelink} observed that the Fourier transform of the Jacobi polynomials leads to the continuous Hahn polynomials,
and he obtained the orthogonality in (\ref{eq:orthogonal}) based on this fact.
A nice survey of the history of the continuous Hahn polynomials is also provided in~\cite{Koelink}.
Among all the facts, we mention that the Bateman polynomials and a sequence of polynomials of Touchard are special cases of
the symmetric continuous Hahn polynomials.

In 1933, Bateman~\cite{Bateman} introduced a sequence of polynomials $F_n(x)$ which is given by
\begin{equation}
F_n(x)={}_3F_2\left(\left.\begin{array}{c}
-n,n+1,\frac{1+x}{2}\\
1,1\end{array}\right|1
\right).
\end{equation}
Several years later, Pasternack~\cite{Pasternack} generalized the Bateman polynomials to a broader set $F_n^{\,\, m}(x)$:
\begin{equation}
F_n^{\,\, m}(x)={}_3F_2\left(\left.\begin{array}{c}
-n,n+1,\frac{1+m+x}{2}\\
1,m+1\end{array}\right|1
\right).
\end{equation}
The orthogonality of these polynomials was derived by Carlitz~\cite{Carlitz1959}.
Clearly, the Bateman polynomials $F_n(x)$ and their generalizations, the Pasternack-Bateman polynomials $F_n^{\,\, m}(x)$, are  the
continuous Hahn polynomials with the parameters, $a,b,c,d$, taking special values.

A sequence of polynomials $Q_n(x)$ was introduced by Touchard~\cite{Touchard} in the study of Taylor's expansion of the exponential function $e^{x(e^z-1)}$.
The explicit expression of these polynomials was later given by Wyman and Moser~\cite{WM}.
It was Carlitz~\cite{Carlitz1957} who noticed that the polynomials of Touchard $Q_n(x)$ are related to the Bateman polynomials:
\begin{equation}
Q_n(x)=(-1)^n2^n n!\binom{2n}{n}^{-1}F_n(2x+1).
\end{equation}

In the paper~\cite{AS} where Atakishiyev and Suslov introduced the continuous Hahn polynomials
(however in another name, the Hahn polynomials of an imaginary argument),
the authors discussed the close relationship between these polynomials and unitary irreducible representations of the Lorentz group $SU(3,1)$.
The continuous Hahn polynomials can also be interpreted as the Clebsch-Gordan coefficients (or the 3-$j$ symbols) for the Lie algebra
$\mathfrak{su}(1,1)$; see  Koelink and Van der Jeugt~\cite{KVdJ}.
These polynomials also appeared in Suslov and Trey's treatment on nonrelativistic and relativistic Coulomb problems recently; see~\cite{ST}.
The Touchard polynomials $Q_n(x)$ and Pasternack-Bateman polynomials $F_n^{\,\, \lambda}(x)$
are used in the Hermite-Pad\'e approximations of the exponential functions and
in Pr\'evost's proof of the irrationality of the $\zeta(2)$ and $\zeta(3)$, where $\zeta(x)$ is the Riemann zeta function; see~\cite{Prevost,PR}.
A connection between the polynomials of $Q_n(x)$ and the combinatorial problem of set partitions
can be found in~\cite{Rota,SunWu}.

An important issue in the study of orthogonal polynomials is to establish
their asymptotic behavior as the degree grows to infinity.
There are several ways to derive the asymptotics for a sequence of orthogonal polynomials,
namely, the method of steepest descent for integrals,
the WKB approximation for differential equations
and the nonlinear steepest descent method for Riemann--Hilbert problems.
For details of these methods, we refer the reader to
Wong~\cite{Wong1}, Olver~\cite{Olver} and Deift \textit{et al.}~\cite{Deift}.
However, to the best of our knowledge,
there is no existing result on the asymptotic approximations of the continuous Hahn polynomials.
The difficulty in applying the above-mentioned approaches to the continuous Hahn polynomials
is that these polynomials are too complicated due to the number of parameters.
For instance, there are no known differential equations that these polynomials satisfy;
the integral representations of these polynomials are complicated,
the generating functions are given as the product of two ${}_1F_1$ functions
or a single ${}_3F_2$ function:
\begin{equation}
{}_1F_1\left(\left.\begin{array}{c}a+ix\\ a+c\end{array}\right|-it\right)
{}_1F_1\left(\left.\begin{array}{c}d-ix\\ b+d\end{array}\right|it\right)
=\sum_{n=0}^\infty\frac{p_n(x;a,b,c,d)}{(a+c)_n(a+d)_n}t^n,
\end{equation}
\begin{equation}
{}_1F_1\left(\left.\begin{array}{c}a+ix\\ a+d\end{array}\right|-it\right)
{}_1F_1\left(\left.\begin{array}{c}c-ix\\ b+c\end{array}\right|it\right)
=\sum_{n=0}^\infty\frac{p_n(x;a,b,c,d)}{(a+d)_n(b+c)_n}t^n,
\end{equation}
\begin{equation}
\begin{aligned}
(1-t)^{1-a-b-c-d}&\hspace{1mm}
{}_3F_2\left(\left.\begin{array}{c}\frac{a+b+c+d-1}{2},\frac{a+b+c+d}{2},a+ix\\ a+c,a+d\end{array}\right|-\frac{4t}{(1-t)^2}\right)\\
=&\sum_{n=0}^\infty\frac{(a+b+c+d-1)_n}{(a+c)_n(a+d)_n}p_n(x;a,b,c,d)t^n;
\end{aligned}
\end{equation}
see~\cite[eqs.~(9.4.11)-(9.4.13)]{KLS}.
These formulas imply that the integral representations of the continuous Hahn polynomials
are too complicated and difficult to analyze.
Moreover, the weight function given in (\ref{eq:weight}) is also complicated and consists of four gamma functions,
which implies that the weight function has poles on the complex $x$-plane.
The loss of analyticity of the weight function suggests it is difficult, if not impossible, to apply the powerful Riemann-Hilbert approach.
To make the zeros of the continuous Hahn polynomials lie in a fixed interval, one introduces a rescaling $x=nt$,
which makes the poles of the weight function approach the real-line in the complex $t$-plane,
and the nonlinear steepest descent method (the contour deformation)
for the Riemann--Hilbert problem cannot be applied in such a case.
To avoid this difficulty,
one may let some or all of the parameters vary and depend on the degree $n$,
which will set the poles of the weight function far away from the real axis.
This, however, changes the problem in the sense that the polynomials under considered
are orthogonal with respect to a varying weight and differ from the original ones.
A similar treatment on the Meixner--Pollaczek polynomials was carried out in~\cite{WQW}.
Note that the continuous Hahn polynomials are in the level above
the Meixner--Pollaczek polynomials in the Askey scheme,
and the weight function of the former is more complicated than the one of the latter,
one can imagine that how tedious it would be when applying the Riemann--Hilbert approach
to the continuous Hahn polynomials even for the varying weight problem.

Each sequence of orthogonal polynomials satisfies a three-term recurrence relation.
Hence, a natural approach to derive the asymptotics of orthogonal polynomials is to analyze the recurrence relations.
A lot of investigations have been done under this direction and it turns out to be a great success by the effort of many authors.
Among the complete list, we briefly mention \cite{CL,CC,DHW,DM1,DM2,Geronimo,Li,VAG,WangXSWong,WangZWong1,WangZWong2, WongLi1992a, WongLi1992b}.
See also an application of this approach to some polynomials from indeterminate moment problems~\cite{DIW} and a survey article on this approach~\cite{Wong2}.

In this paper, we shall apply the method introduced in~\cite{WangXSWong,WangZWong1} to derive the asymptotic approximations for the continuous Hahn polynomials.
This is based on the fact that the three-term recurrence relation
of the continuous Hahn polynomials is simple,
which could be derived straightforwardly from the ${}_3F_2$ polynomials
\begin{equation}
\widetilde{p}_n(x)=\frac{n!}{i^n(a+c)_n(a+d)_n}p_n(x;a,b,c,d),
\end{equation}
and it reads
\begin{equation}
(a+ix)\widetilde{p}_n(x)=A_n\widetilde{p}_{n+1}(x)-(A_n+C_n)\widetilde{p}_n(x)+C_n\widetilde{p}_{n-1}(x),
\end{equation}
where
\begin{equation}
{A}_n=-\frac{(n+a+b+c+d-1)(n+ a+c)(n+a+d)}{(2n+a+b+c+d-1)(2n+a+b+c+d)},
\end{equation}
\begin{equation}
{C}_n=\frac{n(n+ b+c-1)(n+b+d-1)}{(2n+a+b+c+d-2)(2n+a+b+c+d-1)};
\end{equation}
see~\cite{KLS} and \cite[\S 18.19]{NIST}.
The monic continuous Hahn polynomials $\pi_n(x)$ are given by
\begin{equation}\label{eq:monic}
\begin{aligned}
\pi_n(x)=&\frac{n!}{(n+a+b+c+d-1)_n} p_n(x;a,b,c,d)\\
=&i^n \frac{(a+c)_n(a+d)_n}{(n+a+b+c+d-1)_n}\widetilde{p}_n(x),
\end{aligned}
\end{equation}
and satisfy the following three-term recurrence relation
\begin{equation}\label{eq:rec-monic}
x\pi_n(x)=\pi_{n+1}(x)+i(A_n+C_n+a)\pi_n(x)-A_{n-1}C_n \pi_{n-1}(x)
\end{equation}
with $\pi_{-1}(x)=0$ and $\pi_0(x)=1$; see~\cite{KLS}.
Note that,  $A_n+C_n+a$ is purely imaginary provided that $c=\overline{a}$ and $d=\overline{b}$.

\section{Statement of the main results}
\label{sec:results}
The main results in this paper are the asymptotic approximations of the continuous Hahn polynomials,
which include nonuniform asymptotics both in the zero-free region and oscillatory region,
uniform asymptotic expansions around the turning points, and Plancherel-type asymptotics near the turning points.
Asymptotic formulas of the zeros of these polynomials are also included.

An application of Theorem~2 in Ismail and Li~\cite{IL}, see also Theorem~1.1 in~\cite{DIW},
shows the zeros of the continuous Hahn polynomials $\pi_n(x)$ lie asymptotically in the interval $(-n/2,n/2)$.
Therefore, we introduce a scaling $x=nt$, then a neighborhood of the interval $(-1/2,1/2)$
in the complex $t$-plane will be referred to as the oscillatory region,
and the region for $t\in \mathbb C\setminus[-1/2,1/2]$ is called the non-oscillatory region (or zero-free region).
The two values of $t$, $t_+=1/2$ and $t_-=-1/2$, are the turning points.

The first result here is the asymptotic formulas for $\pi_n(x)$ in three different regions of the $t$-plane.
Thought this paper, the branch cut of a multi-valued function, such as $t^{1/2}$, $t^{1/4}$ and $\log t$,
is taken as the negative real axis.
For the sake of convenience, we set
\begin{equation}\label{eq:uv}
u:=\frac{a+b+c+d}{2},\qquad v:=\frac{a+b-c-d}{2i}.
\end{equation}

\begin{theorem}[Non-uniform asymptotic approximations]
\label{thm:nonuniform}
The monic continuous Hahn polynomials have the following large-$n$ asymptotic approximations:
\begin{equation}\label{eq:nonuniform_1}
\begin{aligned}
\pi_n(nt)
=
&2^{\frac32-2u}\left(\frac{n}{4e}\right)^n\frac{t^{1-u}}{(4t^2-1)^{1/4}}
\exp\left\{(2nt+v)\arcsin\frac{1}{2t}\right.
\\
&+\left.\Big(n+u-\frac12\Big)\log\big[2t+(4t^2-1)^{1/2}\big]\right\}
\times\left\{1+\O{\frac{1}{n}}\right\}
\end{aligned}
\end{equation}
for $t$ in any compact subset of $\mathbb C\setminus[-1/2,1/2]$; and
\begin{align}
\pi_n(nt)
\sim
&2^{\frac52-2u}\left(\frac{n}{4e}\right)^n\frac{t^{1-u}}{(1-4t^2)^{1/4}}\exp\left\{(2nt+v)\frac\pi2\right\}
\nonumber
\\
&\times
\left\{\cos\left\{(2nt+v)\log\left[\frac{1+(1-4t^2)^{1/2}}{2t}\right]\right.\right.
\label{eq:nonuniform_2}
\\
&\hspace{12mm}\left.\left.+i\Big(n+u-\frac12\Big)\log\left[2t+i(1-4t^2)^{1/2}\right]+\frac{\pi}{4}\right\}
+\O{\frac{1}{n}}\right\}
\nonumber
\end{align}

for $t$ in any compact subset of $\mathbb C\setminus \big\{(-\infty,0]\cup[\frac12,\infty)\big\}$; and
\begin{align}
\pi_n(nt)
\sim& 2^{\frac52-2u}\left(\frac {n}{4e}\right)^n
\frac{(-t)^{1-u}}{(1-4t^2)^{1/4}}\exp\left\{-(2nt+v)\frac\pi2\right\}
\nonumber
\\
&\times
\left\{\cos\left\{(2nt+v)\log\left[\frac{1+(1-4t^2)^{1/2}}{-2t}\right]\right.\right.
\\
&\left.\left.+i\Big(n+u-\frac12\Big)\log\left[2t+i(1-4t^2)^{1/2}\right]+\frac{4u-3}{4}\pi\right\}
+\O{\frac1n}\right\}
\nonumber
\end{align}
for $t$ in any compact subset of $\mathbb C\setminus \big\{(-\infty,-\frac12]\cup[0,\infty)\big\}$.

\end{theorem}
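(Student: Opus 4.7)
The plan is to apply the Wang--Wong asymptotic theory for second-order linear difference equations to the three-term recurrence (\ref{eq:rec-monic}) under the scaling $x=nt$, and then to invoke the matching technique of Wang to identify $\pi_n(nt)$ as the correct linear combination of the two formal solutions in each region of the $t$-plane.

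Writing the recurrence in the standard form $\pi_{n+1}(nt)+p_n(t)\pi_n(nt)+q_n(t)\pi_{n-1}(nt)=0$ with
\[
p_n(t)=-nt+i(A_n+C_n+a),\qquad q_n(t)=-A_{n-1}C_n,
\]
I would first expand the coefficients for large $n$, using the definitions of $A_n,C_n$ and the abbreviations (\ref{eq:uv}), to obtain complete asymptotic series
\[
p_n(t)=-nt+p_0+p_1/n+\cdots,\qquad q_n(t)=n^2/16+(u-1)n/8+q_0+\cdots
\]
with explicitly computable coefficients. The leading characteristic equation $\mu^2-t\mu+1/16=0$ has roots
\[
\mu_\pm(t)=\frac14\bigl(2t\pm(4t^2-1)^{1/2}\bigr),
\]
which coincide exactly at the turning points $t=\pm1/2$. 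For $t$ in any compact subset of $\mathbb C\setminus[-1/2,1/2]$, the Wang--Wong main theorem produces two linearly independent formal solutions of the form $w_n^{(\pm)}(t)\sim n!\,\mu_\pm(t)^n\,n^{\alpha_\pm(t)}e^{n\varphi_\pm(t)}\bigl(1+\O{1/n}\bigr)$, whose subleading pieces are determined recursively. Applying Stirling to $n!$ converts $n!\,\mu_+(t)^n$ into $(n/(4e))^n[2t+(4t^2-1)^{1/2}]^n$ up to a $\sqrt{2\pi n}$ factor; the next-to-leading phase $\varphi_+(t)$ is responsible for the factor $\exp\{2nt\arcsin(1/(2t))\}$, while the exponent $\alpha_+(t)$ contributes the amplitude $(4t^2-1)^{-1/4}\,t^{1-u}$ together with the remaining factors involving $v$ and $u-1/2$.

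The polynomial is a specific combination $\pi_n(nt)=c_+(t)w_n^{(+)}(t)+c_-(t)w_n^{(-)}(t)$, and I fix the constants as follows. In the zero-free region $|\mu_+(t)|>|\mu_-(t)|$ for $|t|$ large, so only $w_n^{(+)}$ contributes to leading order, and matching the monic behavior $\pi_n(nt)\sim(nt)^n$ as $|t|\to\infty$ determines $c_+(t)$ and in particular recovers the constant prefactor $2^{3/2-2u}$ in (\ref{eq:nonuniform_1}). For the oscillatory formulas, $\mu_\pm(t)=\frac14(2t\pm i(1-4t^2)^{1/2})$ sit on the circle of radius $1/4$ and are complex conjugates on the real interval $(-1/2,1/2)$, so $\pi_n(nt)$ is a genuine superposition of the two solutions. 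Following Wang's matching technique, I analytically continue $w_n^{(\pm)}$ from the zero-free region into the upper, respectively lower, half of a strip around $(-1/2,1/2)$ by crossing the branch cut of $(4t^2-1)^{1/2}$; the cosine then arises as the sum of conjugate exponentials, while the two branch-cut choices for $t^{1-u}$ and $\log t$ produce the two distinct representations in (\ref{eq:nonuniform_2}) and the third formula of the theorem.

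The delicate point will be the precise determination of the Stokes/phase constants $\pi/4$ and $(4u-3)\pi/4$, which depend sensitively on how the multi-valued prefactors $(4t^2-1)^{1/4}$, $t^{1-u}$ and the arguments of the logarithms transform under continuation around $t=\pm1/2$ and across the branch cut on $(-\infty,0]$. A clean way to nail these constants down will be to require consistency of all three formulas on their overlapping domains, and ultimately to match them against the uniform Airy-type expansions valid near the turning points (to be established in subsequent theorems). Tracking these branches cleanly is what makes this otherwise routine application of Wang--Wong technically demanding.
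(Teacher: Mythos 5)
Your proposal follows essentially the same two-stage strategy as the paper: outer (zero-free) asymptotics from the recurrence with the constant fixed by the monic normalization, then Wang's analytic-continuation/matching technique to superpose the two branches into a cosine in the oscillatory region. The paper short-circuits your first stage by invoking Lemma~2.1 of Dai--Ismail--Wang, which already packages the characteristic-root/normalization computation you describe (with $p=1$, $\alpha_0=0$, $\beta_0=1/4$, $\alpha_1=iv/2$, $\beta_1=(u-1)/8$) into explicit integrals whose evaluation gives \eqref{eq:nonuniform_1} with the $1+\mathcal{O}(1/n)$ error bound; re-deriving this from scratch is possible but you would be reproving that lemma. The one genuine shortfall is that you leave the phase constants $\pi/4$ and $(4u-3)\pi/4$ undetermined and propose to recover them by consistency with the later Airy-type expansions; this is both roundabout and logically awkward, since the uniform expansion in the paper is itself calibrated by matching against \eqref{eq:nonuniform_1}. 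These constants come out directly and elementarily: write $\arcsin(1/(2t))=-i\log\bigl[(1-1/(4t^2))^{1/2}+i/(2t)\bigr]$, which equals $\tfrac{\pi}{2}-i\log\bigl[(1+(1-4t^2)^{1/2})/(2t)\bigr]$ for $\Im t>0$ and the conjugate expression for $\Im t<0$; adding the two continuations of the outer formula then produces the cosine with the constant $\pi/4$ coming from the $(4t^2-1)^{-1/4}$ prefactor, and the third formula's constant $(4u-3)\pi/4=\pi/4-(1-u)\pi$ comes from the single additional choice $\log t=\log(-t)+\pi i$ on $(-1/2,0)$. With that branch bookkeeping made explicit, your argument coincides with the paper's.
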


Around each of the turning points, there is a uniform asymptotic expansion in terms of the Airy function $\Ai(\cdot)$ and its derivative.
In the following result, we shall use the notation:
\begin{equation}\label{eq:Kn1}
K_n:=\frac{\Gamma(\frac{n+1}{2})\Gamma(\frac{n+2u-1}{2})\Gamma(\frac{n+a+c}{2})\Gamma(\frac{n+a+d}{2})\Gamma(\frac{n+b+c}{2})\Gamma(\frac{n+b+d}{2})}
{2^n\,\Gamma(\frac{n+u-1/2}{2})\Gamma(\frac{n+u}{2})^2\Gamma(\frac{n+u+1/2}{2})}.
\end{equation}

\begin{theorem}[Uniform asymptotic expansions]
\label{thm:uniform}

Let
\begin{equation}\label{eq:zeta1}
\left\{\begin{aligned}
&\frac23[-\zeta(t)]^\frac32
=
\arccos(2t)-2t\log\frac{1+\sqrt{1-4t^2}}{2t},
& 0<t<\frac12,
\\
&\frac23[\zeta(t)]^\frac32
=
\pi t-2t\arcsin\left(\frac1{2t}\right)
-\log(2t+\sqrt{4t^2-1}),
\,\, &t\geq\frac12,
\end{aligned}\right.
\end{equation}
and
\begin{equation}\label{eq:_Phi1}
\Phi(\zeta)=\frac{v}{\zeta^\frac12}\left[\frac\pi2-\arcsin\frac1{2t}-\log(2t+\sqrt{4t^2-1})\right].
\end{equation}
Then the monic continuous Hahn polynomials have the following asymptotic expansions for large $n$:

\noindent
(i) around the turning point $t_+=1/2$,
\begin{equation}\label{eq:uniform_1}
\begin{aligned}
&\left[w(\nu t)\right]^\frac12\frac{\pi_n(\nu t)}{K_n}
\\
\sim
&2^{1-u}\sqrt{2\pi}\left(\frac{\zeta}{4t^2-1}\right)^\frac14
\\
&\times
\left[\Ai\left(\nu ^\frac23\zeta+\nu ^{-\frac13}\Phi\right)\sum_{s=0}^\infty\frac{A_s(\zeta)}{\nu ^{s-\frac16}}\right.
+\left.\Ai'\left(\nu ^\frac23\zeta+\nu ^{-\frac13}\Phi\right)\sum_{s=0}^\infty\frac{B_s(\zeta)}{\nu ^{s+\frac16}}
\right]
\end{aligned}
\end{equation}
for $0<t<\infty$, where $w(x)$ is the weight function and
\begin{equation}\label{eq:nu}
\nu=n+u-v-\frac12;
\end{equation}

\noindent
(ii) around the turning point $t_-=-1/2$,
\begin{equation}\label{eq:uniform_2}
\begin{aligned}
&\left[w(-\mu t)\right]^\frac12\frac{\pi_n(-\mu t)}{K_n}
\\
\sim
& (-1)^n2^{1-u}\sqrt{2\pi}\left(\frac{\zeta}{4t^2-1}\right)^\frac14
\left[\Ai\left(\mu ^\frac23\zeta-\mu ^{-\frac13}\Phi\right)\sum_{s=0}^\infty\frac{A_s(\zeta)}{\mu ^{s-\frac16}}\right.
\\
&\hspace{50mm}+\left.\Ai'\left(\mu ^\frac23\zeta-\mu ^{-\frac13}\Phi\right)\sum_{s=0}^\infty\frac{B_s(\zeta)}{\mu ^{s+\frac16}}
\right]
\end{aligned}
\end{equation}
for $0<t<\infty$, where $w(x)$ is the weight function and
\begin{equation}\label{eq:mu}
\mu=n+u+v-\frac12.
\end{equation}
In the above expansions, the coefficients $A_0(\zeta)\equiv1$, $B_0(\zeta)\equiv0$ and
all the other coefficients $A_s(\zeta)$, $B_s(\zeta)$ are determined in a recursive manner.
\end{theorem}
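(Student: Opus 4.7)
My plan is to apply the uniform asymptotic theorem for three-term recurrences of Wang and Wong directly to the monic recurrence \eqref{eq:rec-monic}. First I would substitute $x = \nu t$ with $\nu = n + u - v - 1/2$ and rewrite the relation as a normalized linear difference equation whose coefficients admit full asymptotic expansions in $1/n$. Since $A_n \sim -n/4$ and $C_n \sim n/4$, the leading characteristic equation $\lambda^2 - \nu t\,\lambda - A_{n-1}C_n = 0$ has two roots that coalesce exactly when $t = \pm 1/2$, placing the turning points at $t_\pm$ and justifying an Airy-type expansion there. The WKB integral of $\log\lambda_\pm(t)$ then produces the conformal variable $\zeta(t)$ defined piecewise in \eqref{eq:zeta1}, the two branches corresponding to the oscillatory regime $|t| < 1/2$ (complex conjugate roots) and the exponential regime $t > 1/2$ (real distinct roots).

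The correction $\Phi(\zeta)$ arises from the imaginary drift term $i(A_n + C_n + a)$ in the recurrence, which contributes a purely imaginary coefficient of order $O(1)$ after the shift $n \mapsto \nu$ has absorbed the leading real part. In the Wang--Wong formalism this perturbation enters the Airy argument at order $\nu^{-1/3}$ rather than $\nu^{2/3}$; substituting the ansatz with argument $\nu^{2/3}\zeta + \nu^{-1/3}\Phi$ into the normalized recurrence and matching to leading order yields the explicit formula \eqref{eq:_Phi1}. The coefficients $A_s(\zeta)$ and $B_s(\zeta)$ are then generated recursively by equating coefficients of $\Ai$ and $\Ai'$ at each order in $\nu^{-1}$, starting from $A_0 \equiv 1$, $B_0 \equiv 0$, following the general algorithm of Wang--Wong verbatim.

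The multiplicative constant in the resulting expansion is $n$-dependent and must be pinned down by matching to Theorem~\ref{thm:nonuniform} in the overlap region $t > 1/2$ bounded away from $1/2$: I would use the large-argument asymptotics of $\Ai$ and $\Ai'$ to expand the right-hand side of \eqref{eq:uniform_1}, then compare with $[w(\nu t)]^{1/2}\pi_n(\nu t)$ evaluated from \eqref{eq:nonuniform_1} together with Stirling's formula applied to the four gamma factors of the weight \eqref{eq:weight}. The duplication identity $\Gamma(z)\Gamma(z+1/2) = 2^{1-2z}\sqrt{\pi}\,\Gamma(2z)$ then splits each $\Gamma(n+\alpha)$ into half-argument pieces, and collecting everything yields precisely the formula \eqref{eq:Kn1} for $K_n$. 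The expansion around $t_- = -1/2$ follows by replacing $x = \nu t$ with $x = -\mu t$: the reflection $t \mapsto -t$ flips the sign of $v$, so $\nu$ is replaced by $\mu = n + u + v - 1/2$ and $\Phi$ enters with the opposite sign, while the $(-1)^n$ prefactor records the leading behavior $\pi_n(-\mu t) = (-\mu t)^n + \cdots$. The principal obstacle I foresee is the bookkeeping in this matching step: the Stirling expansions of $[w(\nu t)]^{1/2}$ at the turning point, the large-$\zeta$ Airy asymptotics, and the outer formula \eqref{eq:nonuniform_1} must combine to produce exactly the clean half-argument product \eqref{eq:Kn1}, and verifying that the purely imaginary shift encoded by $\Phi$ is absorbed into the Airy argument consistently — rather than surfacing as a spurious exponential prefactor — is the delicate point of the calculation.
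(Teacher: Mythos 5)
Your proposal follows essentially the same route as the paper: normalize the recurrence into the Wang--Wong canonical form (the gamma-product $K_n$ is exactly the normalizing sequence satisfying $K_{n+1}/K_{n-1}=-A_{n-1}C_n$), read off $\zeta$ and $\Phi$ from their Liouville--Green formulas, fix the $n$-dependent constant by matching the large-argument Airy asymptotics against Theorem~\ref{thm:nonuniform} and Stirling's formula for $[w(\nu t)]^{1/2}$ and $K_n$ in the overlap region $t>1/2$, and handle $t_-=-1/2$ via the reflection $\widehat{P}_n(x)=(-1)^nP_n(-x)$, which flips the sign of $v$ and of $\Phi$ and gives $\mu=n+u+v-\tfrac12$. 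The only difference is presentational: the paper introduces $K_n$ up front and verifies the normalization identity directly, whereas you propose to recover it a posteriori from the duplication formula during the matching step.
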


\begin{corollary}[Plancherel-type asymptotics]
\label{cor:Plancherel}

(i) Let $x=\nu/2+4^{-2/3}\nu^{1/3}s$ with $\nu$ given in (\ref{eq:nu}).
Then,
\begin{equation}\label{eq:Plancherel_1}
\begin{aligned}
\left[w(x)\right]^\frac12\frac{\pi_n(x)}{K_n}
=2^{\frac43-u}\sqrt{\pi}
\left[\Ai(s)+\O{\nu^{-1}}\right]
\end{aligned}
\end{equation}
as $n\to\infty$ for $s$ in any bounded real interval.

(ii) Let $x=-\mu/2-4^{-2/3}\mu^{1/3}s$ with $\mu$ given in (\ref{eq:mu}). Then,
\begin{equation}\label{eq:Plancherel_2}
\begin{aligned}
\left[w(x)\right]^\frac12\frac{\pi_n(x)}{K_n}
= (-1)^n2^{\frac43-u}\sqrt{\pi}
\left[\Ai(s)+\O{\mu^{-1}}\right]
\end{aligned}
\end{equation}
as $n\to\infty$ for $s$ in any bounded real interval.

\end{corollary}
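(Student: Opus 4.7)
The plan is to derive the corollary by specializing the uniform Airy-type expansions of Theorem~\ref{thm:uniform} to the Plancherel scaling around each turning point and then Taylor-expanding the resulting ingredients. For part~(i), I substitute $t = 1/2 + 4^{-2/3}\nu^{-2/3}s$ (so that $x = \nu t$) into~(\ref{eq:uniform_1}); the computation reduces to local expansions of the conformal map $\zeta(t)$, the perturbation $\Phi(\zeta)$, and the prefactor $(\zeta/(4t^2-1))^{1/4}$ around the turning point $t_+ = 1/2$.

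The first ingredient is the local behaviour of $\zeta(t)$. Differentiating~(\ref{eq:zeta1}) yields, after the standard simplification, the clean identity $\zeta^{1/2}\zeta'(t) = 2\arccos(1/(2t))$, and inserting the Taylor series $\arccos(1/(2t)) = 2\sqrt{t-1/2}\,(1 + O(t-1/2))$ and integrating gives $\zeta(t) = 4^{2/3}(t-1/2)(1+O(t-1/2))$. Hence $\nu^{2/3}\zeta(t) = s + O(\nu^{-2/3})$ under the Plancherel scaling. For $\Phi$, I would rewrite the bracket in~(\ref{eq:_Phi1}) as $\arccos(1/(2t)) - \operatorname{arccosh}(2t)$ and use the complementary expansions $\arccos(1-y) = \sqrt{2y}(1 + y/12 + O(y^2))$ and $\operatorname{arccosh}(1+y) = \sqrt{2y}(1 - y/12 + O(y^2))$: the leading $\sqrt{t-1/2}$ contributions cancel exactly, leaving a remainder of order $(t-1/2)^{3/2}$, and dividing by $\zeta^{1/2} = O((t-1/2)^{1/2})$ gives $\Phi(\zeta) = O(t-1/2)$, so $\nu^{-1/3}\Phi = O(\nu^{-1})$. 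Finally, the prefactor satisfies $(\zeta/(4t^2-1))^{1/4} \to 4^{-1/12}$ at the turning point.

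Combining these with $A_0 \equiv 1$, $B_0 \equiv 0$ and the bookkeeping of the higher-order terms in the Airy series of Theorem~\ref{thm:uniform}, the expansion~(\ref{eq:uniform_1}) collapses to $2^{1-u}\sqrt{2\pi}\cdot 4^{-1/12}\,[\Ai(s) + O(\nu^{-1})]$, and the constant prefactor simplifies via $2^{1-u}\sqrt{2\pi}\cdot 4^{-1/12} = 2^{1-u+1/2-1/6}\sqrt{\pi} = 2^{4/3-u}\sqrt{\pi}$, which matches the prefactor in~(\ref{eq:Plancherel_1}) exactly. Part~(ii) follows by the analogous specialization of~(\ref{eq:uniform_2}) under $t = 1/2 + 4^{-2/3}\mu^{-2/3}s$ (so that $x = -\mu t$); the factor $(-1)^n$ in~(\ref{eq:Plancherel_2}) is inherited directly from the corresponding factor in~(\ref{eq:uniform_2}).

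The main obstacle is the precise accounting of the error bound. The naive Taylor expansion of $\Ai$ at the shifted argument $s + O(\nu^{-2/3})$ produces only an $O(\nu^{-2/3})$ correction through $\Ai'(s)$, so reaching the sharper $O(\nu^{-1})$ stated in the corollary requires pairing this shift against the subleading $A_1/\nu$ and $B_1/\nu$ corrections in the Airy series of Theorem~\ref{thm:uniform}, together with the $\nu^{\pm 1/6}$ normalization factors intrinsic to the Wang--Wong framework, and verifying that the combined cancellations leave only an $O(\nu^{-1})$ (respectively $O(\mu^{-1})$) remainder, uniformly in $s$ on any bounded real interval.
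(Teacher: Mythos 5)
Your proposal follows essentially the same route as the paper's proof: substitute the Plancherel scaling $t=\tfrac12+(4\nu)^{-2/3}s$ into the uniform expansion (\ref{eq:uniform_1}), use the local behaviour $\zeta(t)=4^{2/3}(t-\tfrac12)+\O{(t-\tfrac12)^2}$ and $\Phi=\O{t-\tfrac12}$ (the paper records the latter as $\Phi/\zeta\to-\tfrac13$, with the same consequence $\nu^{-1/3}\Phi=\O{\nu^{-1}}$), let the prefactor tend to $4^{-1/12}$, and collect constants via $2^{1-u}\sqrt{2\pi}\cdot4^{-1/12}=2^{4/3-u}\sqrt{\pi}$; part~(ii) is obtained from (\ref{eq:uniform_2}) in the same way. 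All of your local computations, including the identity $\zeta^{1/2}\zeta'(t)=2\arccos\frac1{2t}$ and the cancellation of the leading $\sqrt{t-\tfrac12}$ terms in the bracket of (\ref{eq:_Phi1}), are correct and consistent with what the paper writes.

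The obstacle you flag at the end is genuine, and you should know that the paper does not overcome it either: its proof simply asserts $\nu^{2/3}\zeta(t)+\nu^{-1/3}\Phi(\zeta)=s+\O{\nu^{-1}}$, which does not follow from its own displayed expansion of $\zeta$. Indeed the quadratic coefficient of $\zeta$ at $t=\tfrac12$ is $-4^{2/3}/3\neq0$, so $\nu^{2/3}\zeta=s-\tfrac13\,4^{-2/3}s^{2}\nu^{-2/3}+\cdots$, and the prefactor $\bigl(\zeta/(4t^{2}-1)\bigr)^{1/4}$ likewise carries an $\O{\nu^{-2/3}}$ relative correction. These corrections multiply $\Ai'(s)$ and $\Ai(s)$ respectively and cannot be absorbed by the $A_1$, $B_1$ terms of the Wang--Wong series, whose relative contributions are $\O{\nu^{-1}}$ or smaller. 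So the argument as given --- yours and the paper's alike --- justifies only an $\O{\nu^{-2/3}}$ remainder in (\ref{eq:Plancherel_1})--(\ref{eq:Plancherel_2}); you are not missing an ingredient that the paper supplies, and the sharper $\O{\nu^{-1}}$ would require a separate cancellation analysis that neither write-up actually provides.
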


\begin{corollary}[Asymptotics of zeros]
\label{cor:zeros}
Let $x_{n,k}$ denote the zeros of the continuous Hahn polynomials in ascending order, $x_{n,1}<x_{n,2}<\cdots<x_{n,n}$,
and let $\mathfrak{a}_k$ be the zeros of the Airy function $\Ai(x)$ in descending order. Then,
the largest zeros and smallest zeros of the continuous Hahn polynomials have the following asymptotic approximations
\begin{equation}\label{eq:zeros_1}
x_{n,n-k}=\frac12\nu+4^{-\frac23}\nu^{\frac13}\mathfrak{a}_k+\O{\nu^{-2/3}}
\end{equation}
and
\begin{equation}\label{eq:zeros_2}
x_{n,k}=-\frac12\mu-4^{-\frac23}\mu^{\frac13}\mathfrak{a}_k+\O{\mu^{-2/3}}
\end{equation}
as $n\to\infty$ for bounded $k$, respectively.
Here, $\nu$ and $\mu$ are given in (\ref{eq:nu}) and (\ref{eq:mu}) respectively.
\end{corollary}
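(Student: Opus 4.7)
The plan is to deduce the asymptotics of the zeros directly from the Plancherel-type asymptotics in Corollary~\ref{cor:Plancherel}, using Hurwitz's theorem to transfer zeros from the Airy function to the polynomial. I would carry out part~(i) in detail; part~(ii) then follows identically after replacing $\nu$ by $\mu$, $x$ by $-x$, and invoking~(\ref{eq:Plancherel_2}) instead of~(\ref{eq:Plancherel_1}) (the factor $(-1)^n$ is irrelevant for the zero set).

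First I would rewrite (\ref{eq:Plancherel_1}) as
\[
f_n(s):=\frac{[w(x)]^{1/2}\,\pi_n(x)}{2^{4/3-u}\sqrt{\pi}\,K_n}
=\Ai(s)+\O{\nu^{-1}},
\qquad x=\tfrac12\nu+4^{-2/3}\nu^{1/3}s,
\]
with the error uniform on any bounded real $s$-interval. Since $w(x)=|\Gamma(a+ix)\Gamma(b+ix)|^2>0$ on the real axis and $K_n\neq 0$ for large $n$, the real zeros of $\pi_n(x)$ in the window $|x-\nu/2|=\O{\nu^{1/3}}$ correspond bijectively, via $s=4^{2/3}\nu^{-1/3}(x-\nu/2)$, to the real zeros of $f_n(s)$ on bounded intervals.

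Next, fix $K\geq 1$ and choose $\delta>0$ so small that the disks $\{|s-\mathfrak{a}_j|<\delta\}$ for $j=1,\dots,K$ are pairwise disjoint and contain no other zeros of $\Ai$. Since $f_n\to\Ai$ uniformly on $[\mathfrak{a}_K-1,M]$ for any fixed $M$, Hurwitz's theorem (or a direct Rouch\'e estimate on each disk) implies that for all sufficiently large $n$ the function $f_n$ has exactly one simple zero $s_{n,j}$ inside $\{|s-\mathfrak{a}_j|<\delta\}$ for each $j=1,\dots,K$, and no other zeros on $[\mathfrak{a}_K-\delta,M]$. To sharpen $s_{n,j}\to\mathfrak{a}_j$ into the quantitative bound $s_{n,j}-\mathfrak{a}_j=\O{\nu^{-1}}$, I would Taylor-expand around $\mathfrak{a}_j$ and use the simplicity of the Airy zeros, $\Ai'(\mathfrak{a}_j)\neq 0$:
\[
0=f_n(s_{n,j})=\Ai'(\mathfrak{a}_j)(s_{n,j}-\mathfrak{a}_j)+\O{(s_{n,j}-\mathfrak{a}_j)^2}+\O{\nu^{-1}},
\]
whence $s_{n,j}-\mathfrak{a}_j=\O{\nu^{-1}}$. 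Unwinding the $s$-to-$x$ substitution multiplies this error by $4^{-2/3}\nu^{1/3}$, yielding the stated $\O{\nu^{-2/3}}$ remainder in~(\ref{eq:zeros_1}).

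The step I expect to require the most care is confirming the labelling, i.e.\ that the zero $s_{n,j}$ of $f_n$ near $\mathfrak{a}_j$ really corresponds to the appropriate largest zero of $\pi_n$, as opposed to some other root. To settle this I would supplement the Hurwitz step with a global count: by orthogonality $\pi_n$ has $n$ real zeros, and the non-uniform approximation~(\ref{eq:nonuniform_1}) shows that $\pi_n(nt)$ is exponentially large and non-vanishing for $t$ in any compact subset of $(1/2,\infty)$; hence, for large $n$, no zero of $\pi_n$ can lie to the right of $\nu/2+4^{-2/3}\nu^{1/3}M$ for any fixed $M>\mathfrak{a}_1$. Combined with the exact count of zeros produced on $[\mathfrak{a}_K-\delta,M]$ by the Hurwitz step, this pins down the ordering and gives~(\ref{eq:zeros_1}). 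An entirely symmetric argument at the left turning point $t_-=-1/2$, using Corollary~\ref{cor:Plancherel}(ii) together with~(\ref{eq:Plancherel_2}) and the exponential-growth estimate from the analogue of~(\ref{eq:nonuniform_1}) on $(-\infty,-1/2)$, produces~(\ref{eq:zeros_2}).
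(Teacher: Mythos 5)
Your proposal is correct and follows the same route as the paper: both deduce the zero asymptotics from the Plancherel-type formula of Corollary~\ref{cor:Plancherel} by a perturbation argument at the simple zeros of $\Ai$, and then rescale the $\O{\nu^{-1}}$ error in $s$ by $4^{-2/3}\nu^{1/3}$ to get $\O{\nu^{-2/3}}$ in $x$. The only real difference is packaging: the paper invokes Hethcote's lemma (stated just before the corollary), which delivers $|s_{n,k}-\mathfrak{a}_k|\le E/m$ with $E=\O{\nu^{-1}}$ and $m=\min|\Ai'|$ in one stroke, whereas you re-derive the same bound by Taylor expansion at $\mathfrak{a}_k$ and add a Hurwitz/Rouch\'e step for uniqueness. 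Your supplementary labelling argument goes beyond what the paper writes down (the paper is silent on why the zero near $\mathfrak{a}_k$ is precisely $x_{n,n-k}$), and it is welcome, but as stated it has a small hole: the nonuniform formula (\ref{eq:nonuniform_1}) only rules out zeros for $t$ in compact subsets of $(1/2,\infty)$, i.e.\ $x\ge nt_0$ with $t_0>1/2$ fixed, which leaves the intermediate range between $\nu/2+4^{-2/3}\nu^{1/3}M$ and $nt_0$ uncovered. To close it you should instead use the uniform expansion (\ref{eq:uniform_1}) of Theorem~\ref{thm:uniform}, valid for all $t>0$, together with the fact that $\Ai$ is positive and bounded away from zero on $[\mathfrak{a}_1+\delta,\infty)$; with that patch the ordering claim, and hence the corollary, is fully justified.
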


\begin{remark}
(i) The orthogonality (\ref{eq:orthogonal}) for the continuous Hahn polynomials
holds when $\Re(a,b,c,d)>0$.
If $c=\overline{a}$ and $d=\overline{b}$, or if $c=\overline{b}$ and $d=\overline{a}$
then the weight function $w(x)$ is positive, and $p_n(x)$ is orthogonal on the real-line.

(ii) For the general case, if $(a+c)$, $(a+d)$, $(b+c)$ and $(b+d)$
are not equal to zero or negative integers,
the orthogonality (\ref{eq:orthogonal}) still holds
with the integration contour deformed to the complex $x$-plane
to separate the increasing sequences of poles from the decreasing sequences of poles;
see~\cite{Askey1985}.

(iii) The results on asymptotics of the continuous Hahn polynomials stated above
are established for case (i).
Theorems~\ref{thm:nonuniform} and~\ref{thm:uniform}
are still valid for the case (ii),
when the quantities $u$ and $v$ given in~(\ref{eq:uv}) are complex numbers.
The zeros of the polynomials are complex valued in the general case,
and hence Corollary~\ref{cor:zeros} should be amended accordingly in such a situation.

(iv)
All the asymptotic formulas in this paper are not valid for $t$ in a neighbourhood of the origin.
This might be seen from the branch cuts that we take for the multi-valued functions,
and a reason for that is $t=0$, under the scaling $x=nt$,
is an accumulation point of the poles of the weight function, cf.~(\ref{eq:weight}).
For the asymptotic approximation of the continuous Hahn polynomials when the variable $x$ is fixed,
see a recent paper~\cite{HaidariLi}.

\end{remark}

\section{Nonuniform asymptotic approximations}
\label{sec:nonuniform}

\begin{proof}[Proof of Theorem~\ref{thm:nonuniform}]

\

Here, we shall use Lemma 2.1 of Dai-Ismail-Wang~\cite{DIW}
to derive the asymptotic approximation in (\ref{eq:nonuniform_1}).
Following the notations in~\cite{DIW} and with a bit of calculation,
the two coefficients of the recurrence relation (\ref{eq:rec-monic}) are
\[
\begin{aligned}
&a_n:=i(A_n+C_n+a)=i\frac{v}{2}+\O{1},
\\
&b_n:=-A_{n-1}C_n=\frac{n^2}{16}+\frac{u-1}{8}n+\O{1},
\end{aligned}
\]
where $u$ and $v$ are defined in (\ref{eq:uv}).
Since the letter $a$ is already used in the polynomials as a parameter,
we slightly change the notations used in~\cite[Lemma 2.1]{DIW},
namely,
\[
\begin{aligned}
&a_n=\alpha_0 n^p+\alpha_1 n^{p-1}+\O{n^{p-2}},
\\
&b_n=\beta_0^2 n^{2p}+\beta_1 n^{2p-1}+\O{n^{2p-2}}.
\end{aligned}
\]
Now, it follows that
\begin{equation}
p=1,\quad
\alpha_0=0,\quad
\alpha_1=i\frac{v}{2},\quad
\beta_0=\frac14,\quad
\beta_1=\frac{u-1}{8}.
\end{equation}
Take $\sigma=0$ and set $x=nt$. The interval $I$ is now given by
$I=[\alpha_0-2\beta_0,\alpha_0+2\beta_0]=[-1/2,1/2]$.
Then a direct application of \cite[Lemma 2.1]{DIW} gives
\[
\begin{aligned}
\pi_n(nt)
=& \left(\frac n2\right)^n\left[\frac{2t+\sqrt{4t^2-1}}{4t}\right]^{1/2}
\exp\left\{n\int_0^1\log\frac{2t+\sqrt{4t^2-r^2}}{2}\,dr\right\}
\\
&\times\exp\left\{\frac12\int_0^1\frac{r\,dr}{4t^2-r^2}-iv\int_0^1\frac{dr}{\sqrt{4t^2-r^2}}\right\}
\\
&\times\exp\left\{(1-u)\int_0^1\frac{r\,dr}{\sqrt{4t^2-r^2}[2t+\sqrt{4t^2-r^2}]}\right\}
\times\left\{1+\O{\frac{1}{n}}\right\}.
\end{aligned}
\]
A slight calculation shows that
\[
\begin{aligned}
\pi_n(x)
=\left(\frac {n}{4e}\right)^n&\frac{2^{\frac32-2u} t^{1-u}}{(4t^2-1)^{1/4}}
\exp\left\{(2nt+v)\arcsin\frac{1}{2t}\right.\\
&\left.+\Big(n+u-\frac12\Big)\log\left[2t+(4t^2-1)^{1/2}\right]\right\}
\times\left\{1+\O{\frac{1}{n}}\right\}
\end{aligned}
\]
as $n\to\infty$.
Note that the last equation holds uniformly for $t$ in any compact subset of $\mathbb C\setminus[-\frac12,\frac12]$.

\

To derive the asymptotic approximation of the continuous Hahn polynomials in the oscillatory region,
we shall follow the approach developed recently by Wang \cite{WangXS}. The idea of Wang~\cite{WangXS} is the analytic continuation
for the multi-valued functions such as the square root and the logarithm.
Denote
\[
\begin{aligned}
\varphi_n(t)=&2^{\frac32-2u}\left(\frac {n}{4e}\right)^n\exp\left\{(1-u)\log t-\frac14\log(2t-1)-\frac14\log(2t+1)\right\}
\\
&\times\exp\left\{(2nt+v)\arcsin\frac{1}{2t}+\Big(n+u-\frac12\Big)\log\left(2t+(4t^2-1)^{1/2}\right)\right\}.
\end{aligned}
\]
Note that $\varphi_n(t)$ is analytic for $t\in\mathbb C\setminus[-\frac12,\frac12]$,
$t=-\frac12$, $t=\frac12$ and $t=0$ are branch points,
and $t=0$ is also an essential singularity of $\varphi_n(t)$.
By choosing different branch cuts, we can define two functions $\varphi_n^+(t)$ and $\varphi_n^{-}(t)$,
which are equal to $\varphi_n(t)$ for $t$ in the upper and lower complex plane, respectively.
To be precise,
\begin{equation}\label{eq:phip}
\begin{aligned}
\varphi_n^{+}(t)=&2^{\frac32-2u}\left(\frac {n}{4e}\right)^n\exp\left\{(1-u)\log t-\frac14\log(1-4t^2)-i\frac{\pi}{4}\right\}
\\
&\times\exp\left\{(2nt+v)\left[\frac\pi2-i\log\Big(\frac{1+(1-4t^2)^{1/2}}{2t}\Big)\right]\right\}\\
&\times\exp\left\{\Big(n+u-\frac12\Big)\log\left[2t+i(1-4t^2)^{1/2}\right]\right\}
\end{aligned}
\end{equation}
and
\begin{equation}\label{eq:phin}
\begin{aligned}
\varphi_n^{-}(t)=&2^{\frac32-2u}\left(\frac {n}{4e}\right)^n\exp\left\{(1-u)\log t-\frac14\log(1-4t^2)+i\frac{\pi}{4}\right\}
\\
&\times\exp\left\{(2nt+v)\left[\frac\pi2+i\log\Big(\frac{1+(1-4t^2)^{1/2}}{2t}\Big)\right]\right\}\\
&\times\exp\left\{-\Big(n+u-\frac12\Big)\log\left[2t+i(1-4t^2)^{1/2}\right]\right\}.
\end{aligned}
\end{equation}
Note that
\[
\arcsin(z)=-i\log\big[(1-z^2)^{1/2}+iz\big],\qquad  z\in\mathbb C\setminus\{(-\infty,-1)\cup(1,\infty)\};
\]
see \cite[(4.13.19)]{NIST}. Therefore,
\[
\begin{aligned}
\arcsin\left(\frac{1}{2t}\right)
=&-i\log\left[\Big(1-\frac{1}{4t^2}\Big)^{1/2}+i\frac{1}{2t}\right]\\
=&\frac\pi2-i\log\left[\frac{1+(1-4t^2)^{1/2}}{2t}\right]
\end{aligned}
\]
for $\Im t>0$. Similarly, one has
\[
\begin{aligned}
\arcsin\left(\frac{1}{2t}\right)
=&\frac\pi2-i\log\left[\frac{1-(1-4t^2)^{1/2}}{2t}\right]
\end{aligned}
\]
for $\Im t<0$.
Moreover,  $\Re \big(t\arcsin\frac1{2t}\big)>0$ for $\Im t>0$ and $\Re \big(t\arcsin\frac1{2t}\big)<0$ for $\Im t<0$.
The following facts are readily seen:

(i) $\varphi_n^{+}(t)+\varphi_n^{-}(t)$ is analytic on $\mathbb C\setminus\{(-\infty,0]\cup[\frac12,\infty)\}$;

(ii) $\varphi_n^{-}(t)/\varphi_n^{+}(t)$ is exponentially small for large $n$ when $\Im t>0$;

(iii)  $\varphi_n^{+}(t)/\varphi_n^{-}(t)$ is exponentially small for large $n$ when $\Im t<0$.

\

\noindent
Therefore,
\[
\begin{aligned}
\pi_n(nt)\sim&\varphi_n^{+}(t)+\varphi_n^{-}(t)\\
\sim& 2^{\frac52-2u}\left(\frac {n}{4e}\right)^n\exp\left\{(1-u)\log t-\frac14\log(1-4t^2)+(2nt+v)\frac\pi2\right\}
\\
&\times
\cos\left\{(2nt+v)\log\Big[\frac{1+(1-4t^2)^{1/2}}{2t}\Big]\right.\\
&\hspace{1.5cm}\left.+i\Big(n+u-\frac12\Big)\log\left[2t+i(1-4t^2)^{1/2}\right]+\frac{\pi}{4}\right\}
\end{aligned}
\]
for $t$ in $(0,1/2)$. This result can be extended to $t$ in compact subsets of
$\mathbb{C}\setminus\{(-\infty,0]\cup[1/2,\infty)\}$.

To obtain a large-$n$ asymptotic approximation of $\pi_n(nt)$
for $t$ in a region containing the interval $(-1/2,0)$,
we choose the branch cut of the logarithm function in (\ref{eq:phip}) and (\ref{eq:phin})
so that $\log t=\log(-t)+\pi i$ for $t<0$.
A similar argument as above leads to
\[
\begin{aligned}
\pi_n(nt)
\sim& 2^{\frac52-2u}\left(\frac {n}{4e}\right)^n\exp\left\{(1-u)\log(-t)-\frac14\log(1-4t^2)-(2nt+v)\frac\pi2\right\}
\\
&\times
\cos\left\{(2nt+v)\log\Big[\frac{1+(1-4t^2)^{1/2}}{-2t}\Big]\right.\\
&\hspace{1.5cm}\left.+i\Big(n+u-\frac12\Big)\log\left[2t+i(1-4t^2)^{1/2}\right]+\frac{\pi}{4}-(1-u)\pi\right\}
\end{aligned}
\]
for $t$ in $(-1/2,0)$. This result can be extended to $t$ in compact subsets of
$\mathbb{C}\setminus\{(-\infty,-1/2]\cup[0,\infty)\}$.
This completes the proof of Theorem~\ref{thm:nonuniform}.
\end{proof}

\section{Uniform asymptotic expansions}
\label{sec:uniform}

\begin{proof}[Proof of Theorem~\ref{thm:uniform}]

\

Following Wang and Wong~\cite{WangZWong1,WangZWong2}, we define a sequence $\{K_n\}$ by
\begin{equation}\label{eq:Kn}
K_n:=\frac{\Gamma(\frac{n+1}{2})\Gamma(\frac{n+2u-1}{2})\Gamma(\frac{n+a+c}{2})\Gamma(\frac{n+a+d}{2})\Gamma(\frac{n+b+c}{2})\Gamma(\frac{n+b+d}{2})}
{2^n\,\Gamma(\frac{n+u-1/2}{2})\Gamma(\frac{n+u}{2})^2\Gamma(\frac{n+u+1/2}{2})}.
\end{equation}
It is readily seen that
\[
\frac{K_{n+1}}{K_{n-1}}=-A_{n-1}C_n.
\]
Denote
\begin{equation}
P_n(x):=\frac{\pi_n(x)}{K_n},
\quad
\overline{A}_n:=\frac{K_n}{K_{n+1}},
\quad
\overline{B}_n:=-i(A_n+C_n+a)\frac{K_n}{K_{n+1}}.
\end{equation}
It follows from (\ref{eq:rec-monic}) that $P_n(x)$ satisfy the following recurrence relations
\begin{equation}\label{eq:rec_P}
P_{n+1}(x)+P_{n-1}(x)=(\overline{A}_nx+\overline{B}_n)P_n(x),
\end{equation}
where
\begin{equation}\label{eq:barAnBn}
\overline{A}_n\sim n^{-1}\sum_{s=0}^{\infty}\frac{\alpha_s}{n^s}
\qquad\text{and}\qquad
\overline{B}_n\sim\sum_{s=0}^\infty\frac{\beta_s}{n^s}
\end{equation}
with the first few coefficients given by
\begin{equation}
\alpha_0=4,
\quad
\alpha_1=2-4u,
\quad
\beta_0=0,
\quad
\beta_1=2v.
\end{equation}
In terms of the notions of Wang and Wong~\cite{WangZWong1,WangZWong2},
$\theta=1$ and the two turning points $t_{\pm}$ are
\[
\alpha_0 t_\pm+\beta_0=\pm 2,
\]
that is
\begin{equation}
t_-=-\frac12, \qquad
t_+=\frac 12.
\end{equation}
Hence, the recurrence relation in (\ref{eq:rec_P}) belongs to the case $\theta\neq0$ by the classification in~\cite{WangZWong1,WangZWong2,CL}, see also~\cite{Wong2}.

We first concentrate on the turning point $t_+=1/2$.
Following Wang and Wong~\cite{WangZWong1}, we set $\nu:=n+\tau_0$ with
\begin{equation}
\tau_0=-\frac{\alpha_1 t_++\beta_1}{(2-\beta_0)\theta}=u-v-\frac12.
\end{equation}
Then, the expansions of the recurrence coefficients in (\ref{eq:barAnBn}) can be rewritten as
\begin{equation}\label{eq:barAnBnnew}
\overline{A}_n\sim \nu^{-\theta}\sum_{s=0}^{\infty}\frac{\alpha_s'}{\nu^s}
\qquad\text{and}\qquad
\overline{B}_n\sim\sum_{s=0}^\infty\frac{\beta_s'}{\nu^s}
\end{equation}
with $\theta=1$. The first few coefficients of the above expansions are
\begin{equation}\label{eq:ab}
\alpha_0'=4,\qquad \beta_0'=0,\qquad \alpha_1'=-4v,\qquad\beta_1'=2v.
\end{equation}
By the main result in Wang and Wong~\cite{WangZWong1},
two linearly independent solutions of (\ref{eq:rec_P}) are given by
\begin{equation}\label{eq:solution_P}
\begin{aligned}
\mathcal{P}_n(x)
\sim\left(\frac{\zeta}{4t^2-1}\right)^\frac14
&\left[
\Ai\left(\nu ^\frac23\zeta+\nu ^{-\frac13}\Phi\right)
\sum_{s=0}^\infty\frac{A_s(\zeta)}{\nu ^{s-\frac16}}
\right.
\\
&\hspace{3mm}+\left.
\Ai'\left(\nu ^\frac23\zeta+\nu ^{-\frac13}\Phi\right)
\sum_{s=0}^\infty\frac{B_s(\zeta)}{\nu ^{s+\frac16}}
\right]
\end{aligned}
\end{equation}
and
\begin{equation}\label{eq:solution_Q}
\begin{aligned}
\mathcal{Q}_n(x)
\sim\left(\frac{\zeta}{4t^2-1}\right)^\frac14
&\left[
\Bi\left(\nu ^\frac23\zeta+\nu ^{-\frac13}\Phi\right)
\sum_{s=0}^\infty\frac{A_s(\zeta)}{\nu ^{s-\frac16}}
\right.
\\
&\hspace{3mm}+\left.
\Bi'\left(\nu ^\frac23\zeta+\nu ^{-\frac13}\Phi\right)
\sum_{s=0}^\infty\frac{B_s(\zeta)}{\nu ^{s+\frac16}}
\right],
\end{aligned}
\end{equation}
as $n\to\infty$ for $t>0$, where
the function $\zeta(t)$ plays the role of the Liouville-Green transformation
defined as
\[
\left\{
\begin{alignedat}{2}
&\frac23[-\zeta(t)]^\frac32
=-\alpha_0' t^\frac{1}{\theta}\int_t^\frac12\frac{s^{-1/\theta}}{\sqrt{4-(\alpha_0's+\beta_0')^2}}ds
+\arccos\frac{\alpha_0't+\beta_0'}{2}
\\
&\qquad\qquad \text{for}\quad 0<t<\frac12,
\\
&\frac23[\zeta(t)]^\frac32
=\alpha_0' t^{\frac{1}{\theta}}\int_{\frac12}^t
\frac{s^{-1/\theta}}{\sqrt{(\alpha_0's+\beta_0')^2-4}}ds
-\log\frac{\alpha_0't+\beta_0'+\sqrt{(\alpha_0't+\beta_0')^2-4}}{2}
\\
&\qquad\qquad \text{for}\quad t\geq\frac12,
\end{alignedat}
\right.
\]
and $\Phi(\zeta)$ is defined as
\[
\Phi(\zeta)=\frac{1}{\zeta^\frac12}\int_{t_+}^t\frac{\alpha_1'\tau+\beta_1'}{\theta\tau\sqrt{(\alpha_0'\tau+\beta_0')^2-4}}d\tau;
\]
see \cite[eqs. (4.10) and (4.28)]{WangZWong1}.
Recall that $\theta=1$ and the quantities given in (\ref{eq:ab}),
a slight calculation yields
\begin{equation}\label{eq:zeta}
\left\{\begin{alignedat}{2}
&\frac23[\zeta(t)]^\frac32=
\pi t-2t\arcsin\left(\frac1{2t}\right)-\log(2t+\sqrt{4t^2-1}),
&\quad t\geq\frac12,\\
&\frac23[-\zeta(t)]^\frac32=
\arccos(2t)-2t\log\frac{1+\sqrt{1-4t^2}}{2t},
& 0<t<\frac12,\\
\end{alignedat}\right.
\end{equation}
and
\begin{equation}\label{eq:_Phi}
\Phi(\zeta)=\frac{v}{\zeta^\frac12}\left[\frac\pi2-\arcsin\frac1{2t}-\log(2t+\sqrt{4t^2-1})\right].
\end{equation}
The coefficients in the expansions of (\ref{eq:solution_P})-(\ref{eq:solution_Q})
are given by $A_0(\zeta)\equiv1$, $B_0(\zeta)\equiv0$
and all the other coefficients can be derived in a recursive manner.

Note that $P_n(x)$ is a linear combination of the two solutions given in (\ref{eq:solution_P}) and (\ref{eq:solution_Q}), that is
\begin{equation}\label{eq:linearcom}
[w(x)]^\frac12P_n(x)=C_1(x)\mathcal{P}_n(x)+C_2(x)\mathcal{Q}_n(x),
\end{equation}
where $C_1(x)$ and $C_2(x)$ are two functions that depend on $x$ only,
$w(x)$ is the weight function given in (\ref{eq:weight}).
To determine the coefficients in this linear combination,
we notice that (\ref{eq:linearcom}) holds true for complex $x$ by analytic continuation.
Therefore, there is an overlapping region in which both (\ref{eq:linearcom})
and the nonuniform asymptotic approximation in (\ref{eq:nonuniform_1}) are valid.
And hence $C_1(x)$ and $C_2(x)$ can be determined by matching these two results in the overlapping region.

In order to match the results in (\ref{eq:solution_P})--(\ref{eq:linearcom}) and (\ref{eq:nonuniform_1}),
we shall rewrite these results first.
Recall the asymptotics of the Airy functions for the large argument
\[
\Ai(z)\sim \frac{1}{2\sqrt{\pi}z^{1/4}}e^{-\frac23 z^\frac32},
\qquad
\Bi(z)\sim  \frac{1}{\sqrt{\pi}z^{1/4}}e^{\frac23 z^\frac32},
\qquad |\ph z|\leq \frac13\pi-\delta,
\]
for some any $\delta>0$; see~\cite[\S 9.7]{NIST}.
Then, we have
\begin{equation}\label{eq:asymP}
\begin{aligned}
\mathcal{P}_n(x)\sim
\frac{1}{2\sqrt{\pi}}\left(\frac1{4t^2-1}\right)^\frac14
\exp\bigg\{& (2\nu t+v)\Bigl[\arcsin\Big(\frac1{2t}\Big)-\frac{\pi}{2}\Bigr]
\\
&+(\nu+v)\log(2t+\sqrt{4t^2-1})\bigg\}
\end{aligned}
\end{equation}
and
\begin{equation}\label{eq:asymQ}
\begin{aligned}
\mathcal{Q}_n(x)\sim
\frac{1}{\sqrt{\pi}}\left(\frac1{4t^2-1}\right)^\frac14
\exp\bigg\{&-(2\nu t+v)\Bigl[\arcsin\Big(\frac1{2t}\Big)-\frac{\pi}{2}\Bigr]
\\
&-(\nu+v)\log(2t+\sqrt{4t^2-1})\bigg\}
\end{aligned}
\end{equation}
as $\nu\to\infty$ for $t>0$. Here, we have also made use the fact that
\[
\begin{aligned}
&\frac23\left(\nu^\frac23\zeta+\nu^{-\frac13}\Phi\right)^\frac32
\\
=&\frac23\nu\zeta^\frac32\left[1+\frac32\frac{\Phi}{\nu\zeta}+\O{\nu^{-2}}\right]
\\
=&\frac23\nu\zeta^\frac32+v\left[\frac\pi2-\arcsin\frac1{2t}-\log(2t+\sqrt{4t^2-1})\right]
+\O{\nu^{-1}}.
\end{aligned}
\]
From Stirling's formula, we have
\[
|\Gamma(a+i \nu  t)|
\sim\sqrt{2\pi}\exp\left\{(\Re a-1/2)\log|\nu  t|-\frac\pi2|\nu  t+\Im a|\right\},
\]
as $\nu t\to\infty$; see~\cite[eq. (5.11.9)]{NIST}. Hence, when $t>0$
\begin{equation}\label{eq:w-asy}
\begin{aligned}
{}[w(x)]^\frac12
=&|\Gamma(a+i \nu t)\Gamma(b+i \nu t)|
\\
\sim& 2\pi(\nu t)^{u-1}e^{-\pi \nu t-\pi\frac v2},\qquad \text{as\,\,}\nu\to\infty.
\end{aligned}
\end{equation}
The above asymptotic formula holds true for $t$
in any compact subset of $\mathbb C\setminus(-\infty,0]$ by analytic continuation.
Recall that
\[
P_n(x)=\frac{\pi_n(x)}{K_n}.
\]
By Stirling's formula and the definition of $K_n$ in (\ref{eq:Kn}), we obtain
\[
\frac{1}{K_n}\sim \frac{1}{2\pi}\left(\frac{4e}{n}\right)^n\left(\frac{2}{n}\right)^{u-1},
\qquad\text{as}\quad n\to\infty.
\]
Note that $\nu=n+u-v-1/2$. Then, we have from (\ref{eq:nonuniform_1}) that as $n\to\infty$
\[
\begin{aligned}
\pi_n(\nu t)\sim &
2^{\frac32-2u}\left(\frac {n}{4e}\right)^nt^{1-u}\left(\frac{1}{4t^2-1}\right)^\frac{1}{4}
\\
&\times
\exp\left\{(2\nu t+v)\arcsin\frac{1}{2t}+(\nu+v)\log(2t+\sqrt{4t^2-1})\right\}.
\end{aligned}
\]
A combination of the last four formulas leads to
\[
\begin{aligned}
&\left[w(x)\right]^\frac12P_n(x)
\\
\sim&e^{-\pi \nu t-\frac v2 \pi}2^{\frac12-u}\left(\frac{1}{4t^2-1}\right)^\frac{1}{4}
\\
&\times
\exp\left\{(2\nu t+v)\arcsin\frac{1}{2t}+\Big(n+u-\frac12\Big)\log(2t+\sqrt{4t^2-1})\right\},
\end{aligned}
\]
as $n\to\infty$ and for $t>\frac12$.
Comparing the last formula with (\ref{eq:linearcom})--(\ref{eq:asymQ}) leads to
\begin{equation}
C_1(x)\equiv 2^{1-u}\sqrt{2\pi},
\qquad
C_2(x)\equiv 0.
\end{equation}
This completes the proof of the first part of Theorem~\ref{thm:uniform}.

\

The second part of Theorem~\ref{thm:uniform} can be proved in a similar manner.
At the turning point $t_-=-1/2$, we set
\begin{equation}\label{eq:P-hat}
\widehat{P}_n(x)=(-1)^nP_n(-x).
\end{equation}
The turning point $t_-=-1/2$ becomes $1/2$ under this transformation.
It follows from (\ref{eq:rec_P}) and (\ref{eq:P-hat}) that
$\{\widehat{P}_n(x)\}$ satisfy the following three-term recurrence relation
\begin{equation}\label{eq:rec-Phat}
\widehat{P}_{n+1}(x)+\widehat{P}_{n-1}(x)=(\overline{A}_nx-\overline{B}_n)\widehat{P}_n(x),
\end{equation}
which only differs from (\ref{eq:rec_P}) in the sign of $\overline{B}_n$.
Let
\begin{equation}
\mu:=n+\widehat{\tau_0}=n-\frac{\alpha_1/2 -\beta_1}{(2+\beta_0)\theta}=n+u-v-\frac12.
\end{equation}
There are two linearly independent solutions of (\ref{eq:rec-Phat}) which
have the forms in (\ref{eq:solution_P})-(\ref{eq:solution_Q}).
Comparing (\ref{eq:rec-Phat}) with (\ref{eq:rec_P}),
one just needs to replace the function $\Phi(\zeta)$ by
\begin{equation}
\begin{aligned}
\widehat\Phi(\zeta)
=\frac{v}{\zeta^\frac12}\left[\arcsin\frac1{2t}-\frac\pi2+\log\left(2t+\sqrt{4t^2-1}\right)\right]
=-\Phi(\zeta).
\end{aligned}
\end{equation}
Then, a similar matching technique as we have done in the first part yields
\[
\begin{aligned}
&\left[w(-\mu t)\right]^\frac12 \widehat{P}_n(\mu t)
\\
\sim&2^{1-u}\sqrt{2\pi}\left(\frac{\zeta}{4t^2-1}\right)^\frac14
\\
&\times\left[
\Ai\left(\mu ^\frac23\zeta-\mu ^{-\frac13}\Phi\right)\sum_{s=0}^\infty\frac{A_s(\zeta)}{\mu ^{s-\frac16}}
+\Ai'\left(\mu ^\frac23\zeta-\mu ^{-\frac13}\Phi\right)\sum_{s=0}^\infty\frac{B_s(\zeta)}{\mu ^{s+\frac16}}
\right],
\end{aligned}
\]
as $n\to\infty$ for $t>0$.
This completes the proof of Theorem~\ref{thm:uniform}.
\end{proof}

\

\begin{proof}[Proof of Corollary~\ref{cor:Plancherel}]

\

Note that the function $\zeta(t)$ in (\ref{eq:zeta}) is monotonically increasing and has the asymptotic behavior
\[
\zeta(t)=4^{2/3}\left(t-\frac12\right)+\O{\Big(t-\frac12\Big)^2},
\qquad
\text{as}\quad t\to\frac12.
\]
Moreover,
\[
\frac{\Phi(\zeta)}{\zeta}\sim -\frac{1}{3}+\O{t-\frac12},
\qquad
\text{as}\quad t\to\frac12.
\]
Take $t=1/2+(4\nu)^{-2/3}s$ with $s$ in an arbitrary bounded interval.
Then,
\[
\nu^{\frac23}\zeta(t)+\nu^{-\frac13}\Phi(\zeta)=s+\O{\nu^{-1}},
\qquad
\text{as}\quad \nu\to\infty.
\]
Notice that $A_0(\zeta)\equiv1$ and $B_0(\zeta)\equiv0$ in (\ref{eq:uniform_1}).
Then the Plancherel-type asymptotic formula (\ref{eq:Plancherel_1}) is a direct consequence of (\ref{eq:uniform_1}).
In a similar manner, the Plancherel-type asymptotic formula in (\ref{eq:Plancherel_2})
follows from (\ref{eq:uniform_2}) by taking $t=1/2+(4\mu)^{-2/3}s$.
\end{proof}

For the asymptotic formula of the zeros of the continuous Hahn polynomials,
we recall the following result of Hethcote~\cite{Hethcote}:

\begin{theorem}[Hethcote]
In the interval $[a-\rho,a+\rho]$, suppose that $f(t)=g(t)+\varepsilon(t)$,
where $f(t)$ is continuous, $g(t)$ is differentiable,
$g(a)=0$, $m:=\min|g'(t)|>0$ and
\[
E=\max|\varepsilon(t)|<\min\big\{|g(a-\rho)|,|g(a+\rho)|\big\}.
\]
Then, there exists a zero $c$ of $f(t)$ in the interval such that $|c-a|\leq E/m$.
\end{theorem}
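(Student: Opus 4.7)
The plan is to combine the Intermediate Value Theorem, which will locate a zero $c$ of $f$ inside $[a-\rho,a+\rho]$, with the Mean Value Theorem, which will convert the smallness of $g(c)$ into the quantitative bound $|c-a|\leq E/m$. The first step is to argue that $g$ is strictly monotonic on $[a-\rho,a+\rho]$. Since $g$ is differentiable and $|g'(t)|\geq m>0$ throughout the interval, the function $g'$ vanishes nowhere; because derivatives satisfy the intermediate value property (Darboux's theorem), $g'$ cannot change sign, and hence $g$ is strictly monotonic. Together with the hypothesis $g(a)=0$, this forces $g(a-\rho)$ and $g(a+\rho)$ to be nonzero and of opposite signs.

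Next I would transfer this sign change from $g$ to $f$. The hypothesis $|\varepsilon(t)|\leq E < \min\{|g(a-\rho)|, |g(a+\rho)|\}$ means
\[
|f(a\pm\rho)-g(a\pm\rho)|=|\varepsilon(a\pm\rho)|<|g(a\pm\rho)|,
\]
so $f(a-\rho)$ and $f(a+\rho)$ inherit the same signs as $g(a-\rho)$ and $g(a+\rho)$, respectively, and in particular are of opposite signs. Since $f$ is continuous on $[a-\rho,a+\rho]$, the Intermediate Value Theorem supplies some $c\in(a-\rho,a+\rho)$ with $f(c)=0$.

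Finally, to establish the quantitative estimate, I would substitute into $f(c)=g(c)+\varepsilon(c)=0$ to obtain $|g(c)|=|\varepsilon(c)|\leq E$. Applying the Mean Value Theorem to the differentiable function $g$ on the closed interval with endpoints $a$ and $c$, there exists $\xi$ strictly between $a$ and $c$ with $g(c)-g(a)=g'(\xi)(c-a)$. Using $g(a)=0$ and the lower bound $|g'(\xi)|\geq m$, this yields
\[
|c-a|=\frac{|g(c)|}{|g'(\xi)|}\leq\frac{E}{m},
\]
which is the desired conclusion. The only step requiring a bit of care is the initial monotonicity argument, since the hypothesis only assumes $g$ is differentiable (not that $g'$ is continuous), so Darboux's theorem rather than ordinary continuity of $g'$ is the correct tool; after that, the argument is a direct and routine application of the Intermediate Value Theorem and the Mean Value Theorem.
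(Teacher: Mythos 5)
Your proof is correct. Note that the paper does not prove this statement at all: it is quoted verbatim from Hethcote's 1970 article \cite{Hethcote} and used as a black box to pass from the Plancherel-type asymptotics to the zero asymptotics, so there is no in-paper argument to compare against. Your argument (Darboux to get strict monotonicity of $g$ and a sign change of $g$, hence of $f$, at the endpoints; the Intermediate Value Theorem to produce a zero $c$ of $f$; and the Mean Value Theorem applied to $|g(c)|=|\varepsilon(c)|\leq E$ to get $|c-a|\leq E/m$) is complete and is essentially the standard proof of Hethcote's lemma, with the minor observation that the endpoint-sign argument also implicitly confirms $E/m\leq\rho$ is not needed since the bound is derived a posteriori for whatever zero the IVT produces.
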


Applying this result to (\ref{eq:Plancherel_1}) immediately leads to the formula (\ref{eq:zeros_1}).
The formula in (\ref{eq:zeros_2}) can be established in a similar manner.

\section*{Acknowledgements}
All authors would like to thank the two Reviewers' invaluable comments,
their suggestions and comments improve the paper a lot and make it more readable.
This work was supported in part by the National Natural Science Foundation of China [grant numbers 11571375, 11501215, and 11801480],
the Research Grants Council of Hong Kong SAR [grant numbers 12303515 and 12328416],
and the President's Fund from the Chinese University of Hong Kong, Shenzhen.

\end{document}